\theoremstyle{definition}
\theoremstyle{remark}
\newtheorem*{rem}{Remarks}
\theoremstyle{plain}
\newtheorem{thm}{Theorem}
\newtheorem{lem}[thm]{Lemma}
\newcommand{\abs}[1]{\ensuremath{\left\vert #1 \right\vert}}
\DeclareMathOperator{\diam}{diam}
\DeclareMathOperator{\mat}{Mat}
\DeclareMathOperator{\bad}{Bad}
\DeclareMathOperator{\GL}{GL}
\newtheorem*{Dickinson}{Lemma (Dickinson)}
\newcommand{\mb}[1]{\mathbb{#1}}
\newcommand{\mbf}[1]{\mathbf{#1}}
\newcommand{\mcal}[1]{\mathcal{#1}}
\begin{document}

\title{Badly approximable systems of linear forms in absolute value}

\author{M. HUSSAIN}

\address{M. Hussain, Department of Mathematics and Statistics, La Trobe University, Melbourne, 3086, Victoria, Australia}

\email{M.Hussain@latrobe.edu.au}

\thanks{MH's visit to Aarhus was sponsored in part by La Trobe
  University's travel grant and from SK's Danish Research Council for
  Independent Research}

\author{S. KRISTENSEN}

\address{S. Kristensen, Department of Mathematical Sciences, Faculty
  of Science, University of Aarhus, Ny Munkegade 118,
  DK-8000 Aarhus C, Denmark}

\email{sik@imf.au.dk}

\thanks{SK's Research supported the Danish Research Council for
  Independent Research. }

\keywords{Diophantine approximation; systems of linear forms; absolute
  value}

\subjclass[2000]{11J83}

\begin{abstract}
  In this paper we show that the set of mixed type badly approximable
  simultaneously small linear forms is of maximal dimension. As a
  consequence of this theorem we settle the conjecture stated in
  \cite{bad}.
\end{abstract}

\maketitle

\section{Introduction}
\label{sec:setup-result}

Let  $X =(x_{ij})\in \mb{R} ^{mn}$  be a point identified
as an $m\times n$ matrix. Throughout, $m$ and $n$ are fixed natural
numbers. Let
\begin{equation*}
q_{_{1}}x_{_{1i}}+q_{_{2}}x_{_{2i}}+...+q_{_{m}}x_{_{mi}}\text{ \ \
\ \ \ \ }(1\leq i\leq n),
\end{equation*}
be a system of $n$ linear forms in $m$ variables written more
concisely as $\mbf{q}X$. The classical result of Dirichlet \cite{dir}
states that for any point $X\in\mb{I}^{mn}$, there exist infinitely
many integer points $(\mbf{p, q})\in\mb{Z}^n\times \mb{Z}^m\setminus
\{\mbf 0\}$ such that
\begin{equation}\label{2}
  \|\mbf{q}X\|=|\mbf{q}X-\mbf{p}|:=\max_{1\leq i\leq
    n}|q_{_{1}}x_{_{1i}}+q_{_{2}}x_{_{2i}}+...+q_{_{m}}x_{_{mi}}-p_i|
  < |\mbf{q}|^{-\frac{m}{n}},
\end{equation}
where $\left\vert \mbf{q}\right\vert $ denotes the supremum norm i.e.
$\left\vert \mbf{q}\right\vert :=\max \left\{ \left\vert
    q_{_{1}}\right\vert ,\left\vert q_{_{2}}\right\vert ,\dots
  ,\left\vert q_{_{m}}\right\vert \right\}$, and $\| \mbf{q} X \|$
denotes the distance from $\mbf{q} X$ to the nearest vector with
integer coordinates in the supremum norm.

The right hand side of (\ref{2}) may be sharpened by a constant
$c(m,n)$ but the best permissible values for $c(m,n)$ are unknown
except for $m=n=1.$ A point $X\in \mb{R}^{mn}$ is said to be
\emph{badly approximable} if the right hand side of (\ref{2}) cannot
be improved by an arbitrary positive constant. Denote the set of all
such points as \textbf{Bad}$(m,n)$; that is $X\in \textbf{Bad}(m,n)$
if there exists a constant $C(X)>0$ such that
$$\|\mbf{q}X\|>C(X)|\mbf{q}|^{-\frac{m}{n}}\;\;\;\text {for
  all}\;\;\mbf{q}\in\mb{Z}^m \setminus \{\mbf{0}\}.$$

The Khintchine--Groshev theorem \cite{Groshev, K} implies that
$\textbf{Bad}(m,n)$ is of $mn$--dimensional Lebesgue measure zero and
a result of Schmidt \cite{schmidt} states that
$\dim\textbf{Bad}(m,n)=mn$, where $\dim A$ denotes the Hausdorff
dimension of the set $A$ -- see \cite{Falc2} for the definition of
Hausdorff dimension.

An alternative set to consider is obtained by replacing the distance
to the nearest integer vector $\| \cdot \|$ by the usual supremum
norm.  The Hausdorff dimension of the set
\[\textbf{Bad}^*(m,n)=\{X\in
\mb{I}^{mn}:|\mbf{q}X|>C(X)|\mbf{q}|^{-\frac{m}{n}+1}\;\;\;\text{for
  all}\;\;\mbf{q}\in\mb{Z}^m \setminus \{\mbf{0}\}\}\] is discussed in
\cite{bad} where it is proved that $\textbf{Bad}^*(m,1)=m.$

It can be easily seen that the above set is obtained from
$\textbf{Bad}(m,n)$ by simply imposing the additional condition that
$\mbf{p}=\mbf{0}$. It is a natural question to ask what happens for a
less restrictive condition on the allowed set of vectors $\mbf{p}$.
Such a condition gives rise to a range of hybrids between the set
$\textbf{Bad}(m,n)$ and $\textbf{Bad}^*(m,n)$, where some coordinates
of the image are considered in absolute value and some in the distance
to nearest integer.

Let $A \subseteq \mb{Z}^n$ be a lattice of integer vectors in a linear
subspace of dimension $u$, where $u$ is an integer and $0\leq u \leq
n$.  A consequence of the Dirichlet type theorem established by
Dickinson in \cite{mixed} is the following statement.

\begin{Dickinson}
  Suppose that $m+u > n$. For each $X\in \mb{R}^{mn}$ there exist
  infinitely many non-zero integer vectors $(\mbf{p, q})\in A \times
  \mb{Z}^{m}$ such that
  $$|\mbf{q}X-\mbf{p}|< C
  |\mbf{q}|^{-\frac{m + u}{n}+1}.$$
\end{Dickinson}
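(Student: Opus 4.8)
The plan is to prove this by Dirichlet's pigeonhole principle, after splitting $\mb{R}^n$ according to the subspace spanned by $A$. Write $V = \spn_{\mb{R}}(A)$, a rational subspace of dimension $u$, and fix a complementary subspace $W$ of dimension $n-u$, so that $\mb{R}^n = V \oplus W$ and every vector decomposes into a $V$-part and a $W$-part. The point to keep in mind is that the approximating vector $\mbf{p}$ is forced to lie in $A \subseteq V$, so it can only influence the $V$-component of $\mbf{q}X - \mbf{p}$; the $W$-component must be made small using the freedom in $\mbf{q}$ alone. For bookkeeping I would first apply a fixed element of $\GL_n(\mb{Z})$ to the target coordinates, carrying $V$ onto $\mb{R}^u \times \{\mbf 0\}$; this replaces $A$ by a finite-index sublattice of $\mb{Z}^u \times \{\mbf 0\}$ and the supremum norm by an equivalent one, changing only the implied constants and not the exponent.

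With this set-up the counting runs as follows. Fix a large integer $Q$ and let $\mbf{q}$ range over the $Q^m$ vectors with $0 \le q_i < Q$. For each such $\mbf{q}$ I record two data: the class of $(\mbf{q}X)_V$ modulo $A$, which lives in the compact torus $V/A$ of fixed covolume, and the value of $(\mbf{q}X)_W$, which lies in a box of side $\asymp Q$ in $W \cong \mb{R}^{n-u}$. Partitioning the torus into cells of side $\delta$ and the $W$-box into cells of side $\delta$ produces $\asymp \delta^{-u}$ and $\asymp (Q/\delta)^{n-u}$ cells respectively, hence $\asymp Q^{n-u}\delta^{-n}$ cells in total. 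Pigeonhole yields two distinct $\mbf{q}_1, \mbf{q}_2$ sharing a cell as soon as $Q^m$ exceeds this count, i.e. as soon as $\delta \gg Q^{-(m+u)/n + 1}$. Setting $\mbf{q} = \mbf{q}_1 - \mbf{q}_2 \neq \mbf 0$, with $\abs{\mbf{q}} < Q$, I read off that $\abs{(\mbf{q}X)_W} < \delta$ and that $(\mbf{q}X)_V$ lies within $\delta$ of some $\mbf{p} \in A$; combining the two components gives $\abs{\mbf{q}X - \mbf{p}} \ll \delta \asymp Q^{-(m+u)/n+1}$. As the exponent $-(m+u)/n+1$ is negative precisely because $m+u>n$, and $\abs{\mbf{q}} \le Q$, this is bounded by $C\abs{\mbf{q}}^{-(m+u)/n+1}$, the desired inequality for this $Q$.

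To upgrade one solution per $Q$ into infinitely many, I would argue by contradiction. If only finitely many pairs $(\mbf{p},\mbf{q})$ satisfied the inequality, the non-zero values $\abs{\mbf{q}X - \mbf{p}}$ among them would be bounded below by some $\eta > 0$; but letting $Q \to \infty$ produces solutions with $\abs{\mbf{q}X - \mbf{p}} < C Q^{-(m+u)/n+1} \to 0$, forcing $\abs{\mbf{q}X - \mbf{p}} = 0$ for large $Q$, i.e. $\mbf{q}X \in A$ for some $\mbf{q} \neq \mbf{0}$. In that degenerate case the integer multiples $(t\mbf{q})X = t(\mbf{q}X) \in A$ already supply infinitely many solutions, so either way infinitely many pairs exist.

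The step I expect to be the main obstacle is the counting balance: one must treat the $V$-direction as a fixed compact torus but the $W$-direction as a region growing like $Q$, and verify that the two contributions combine to exactly $Q^{n-u}\delta^{-n}$, so that the pigeonhole threshold is $\delta \asymp Q^{-(m+u)/n+1}$ and the hypothesis $m+u>n$ is seen to be exactly what makes this tend to zero. Relating the supremum norm on $\mb{R}^n$ to the chosen $V$- and $W$-norms contributes only harmless constants once the $\GL_n(\mb{Z})$ normalisation is in place.
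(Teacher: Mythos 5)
The statement you were asked to prove is one the paper itself never proves: Dickinson's lemma is imported wholesale from \cite{mixed} (``a consequence of the Dirichlet type theorem established by Dickinson''), so there is no internal proof to compare yours against. Judged on its own merits, your pigeonhole argument is correct, and it is the standard Dirichlet-type argument for such mixed statements, so nothing here conflicts with the cited source. The key steps all check out: the reduction via an element of $\GL_n(\mb{Z})$ carrying $\spn(A)$ onto $\mb{R}^u\times\{\mbf{0}\}$ is legitimate precisely because $A$ consists of integer vectors, so its span is a rational subspace and $\mb{Z}^n\cap\spn(A)$ extends to a $\mb{Z}$-basis of $\mb{Z}^n$; the cell count $\asymp \delta^{-u}(Q/\delta)^{n-u}=Q^{n-u}\delta^{-n}$ is right, so taking $\delta = cQ^{-(m+u)/n+1}$ with $c$ large enough to absorb the implied constant makes the number of cells smaller than $Q^m$ and pigeonhole produces $\mbf{q}=\mbf{q}_1-\mbf{q}_2$ with $0<\abs{\mbf{q}}<Q$ and $\abs{\mbf{q}X-\mbf{p}}\ll\delta$ for some $\mbf{p}\in A$; and the replacement of $Q$ by $\abs{\mbf{q}}$ uses exactly the negativity of the exponent, i.e.\ $m+u>n$, as you correctly isolate. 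Your infinitude step, including the degenerate case $\mbf{q}X\in A$ where the multiples $(t\mbf{p},t\mbf{q})$ supply infinitely many exact solutions, is also sound. Two minor points you should make explicit: first, the constant $C$ your argument yields depends on $X$ (the box containing the $W$-components has side comparable to $\abs{X}Q$) as well as on $A$; this is harmless in the context of the paper, where all constants are permitted to depend on $X$, but the lemma as quoted leaves $C$ unquantified, so the dependence should be stated. Second, in the torus step, ``sharing a cell'' should be read as sharing a cell of a partition of a fixed fundamental parallelepiped for $A$, so that the difference of the two lifts is genuinely within $O(\delta)$ of a single lattice point of $A$; as written this is implicit but deserves a sentence.
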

In view of this lemma, it is natural to consider the following badly
approximable set.  Let $\bad_A(m,n)$ denote the set of $X\in
\mb{R}^{mn}$ for which there exists a constant $C(X)>0$ such that
\begin{equation}
  \label{def}
  |\mbf{q}X-\mbf{p}|>C(X)|\mbf{q}|^{-\frac{m+u}{n}+1}\;\;\;\forall\;\;(\mbf{p,
    q})\in A \times \mb{Z}^{m} \setminus \{\mbf{0}\}.
\end{equation}

The set $\bad_A(m,2)$ is related to an exceptional set associated with
the linearization of germs of complex analytic diffeomorphisms of
$\mb{C}^m$ near a fixed point and is the $m$-dimensional version of
the Schr\"{o}der's functional equation, see \cite{Arn, drv, drv2} for
further details.

In the present note, as a consequence of the following theorem we
prove a conjecture from \cite{bad}.

\begin{thm}
  \label{thm:bad}
  The Hausdorff dimension of $\bad_A(m,n)$ is maximal. If $m +u\leq n$,
  the Lebesgue measure of $\bad_A(m,n)$ is full.
\end{thm}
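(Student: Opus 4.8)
The plan is to split on the sign of the exponent $\tau:=-\frac{m+u}{n}+1=\frac{n-m-u}{n}$, which is $\ge 0$ exactly when $m+u\le n$ and $<0$ exactly when $m+u>n$. In either case I first normalise the lattice: since $A$ spans a rational subspace $V$ of dimension $u$, some $T\in\GL_n(\mb Q)$ maps $V$ onto $\mb R^{u}\times\{\mbf 0\}$, and the substitution $X\mapsto XT$ is a bi-Lipschitz bijection of $\mb R^{mn}$ preserving Lebesgue nullity and Hausdorff dimension while turning $\bad_A(m,n)$ into $\bad_{AT}(m,n)$. I may thus assume $A\subseteq\mb R^{u}\times\{\mbf 0\}$. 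Writing $X=(X^{(1)},X^{(2)})$, where $X^{(2)}$ is the $m\times(n-u)$ block of the last $n-u$ columns, the last $n-u$ coordinates of $\mbf qX-\mbf p$ coincide with those of $\mbf qX^{(2)}$, so that $\abs{\mbf qX-\mbf p}\ge c\abs{\mbf qX^{(2)}}$ for every $(\mbf p,\mbf q)$.

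When $m+u\le n$ this reduction settles the measure claim almost immediately. Now $X^{(2)}$ is $m\times(n-u)$ with $m\le n-u$, so for Lebesgue-almost every $X$ its $m$ rows are linearly independent and the map $\mbf q\mapsto\mbf qX^{(2)}$ is injective, whence $\abs{\mbf qX^{(2)}}\ge c(X)\abs{\mbf q}$ for all $\mbf q\ne\mbf 0$. Since $0\le\tau\le1$ gives $\abs{\mbf q}\ge\abs{\mbf q}^{\tau}$ for integer $\mbf q\ne\mbf 0$, the displayed inequality yields $\abs{\mbf qX-\mbf p}\ge c(X)\abs{\mbf q}^{\tau}$ for all $\mbf q\ne\mbf 0$, while the vectors with $\mbf q=\mbf 0$ are bounded below by $\lambda_1(A)>0$. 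Hence this full-measure set of $X$ lies in $\bad_A(m,n)$, which proves fullness of the measure and in particular gives the maximal dimension in this regime.

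The case $m+u>n$ is the substantial one, and here the projection is useless: now $m>n-u$, so $\mbf q\mapsto\mbf qX^{(2)}$ has a nontrivial kernel along which $\abs{\mbf qX^{(2)}}$ vanishes, and the badly approximable property can only come from the interaction with $A$ in the first $u$ columns; it must be secured in all variables at once. I would build a Cantor set $\mbf K=\bigcap_k\mcal K_k\subseteq\bad_A(m,n)$: each $\mcal K_k$ is a finite union of cubes of side $\eta^{k}$, every such cube is subdivided into $\eta^{-mn}$ subcubes, and from each parent I discard those subcubes meeting the resonant region $\{X:\abs{\mbf qX-\mbf p}\le c\abs{\mbf q}^{\tau}\text{ for some }\mbf p\in A\}$ for $\mbf q$ in a window $R_k\le\abs{\mbf q}<R_{k+1}$ chosen so that $c\abs{\mbf q}^{\tau}$ is comparable to the subcube side. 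In each of the first $u$ columns the admissible $\mbf p\in A$ generate a family of parallel slabs spaced like $\abs{\mbf q}^{-1}$, whereas in each of the last $n-u$ columns there is a single slab about the hyperplane $\mbf q\cdot X_{\cdot j}=\mbf 0$, which can meet a given parent only when that parent lies within $c\abs{\mbf q}^{\tau}$ of the hyperplane.

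The crux, and the step I expect to be the main obstacle, is the deletion count: summing the number of subcubes spoiled over all $\mbf p$ and all $\mbf q$ in the window and showing that it stays below a fixed fraction of $\eta^{-mn}$, uniformly in $k$. The delicacy is the asymmetry between the two column types -- the inhomogeneous columns contribute many slabs (one per relevant integer translate) while the homogeneous columns contribute few but, decisively, only threaten cubes close to a fixed hyperplane, a geometric economy with no analogue in Schmidt's treatment of $\bad(m,n)$ that must be exploited to keep the total under control. With the window ratio $R_{k+1}/R_k$ and the subdivision ratio $\eta$ chosen compatibly -- so that each thin slab spoils only boundedly many subcubes while the number of slabs stays bounded -- the surviving fraction is bounded below at every level, and a standard mass distribution on $\mbf K$ then forces $\dim\bad_A(m,n)\ge mn-\varepsilon$ for every $\varepsilon>0$, hence equality.
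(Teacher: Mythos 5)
Your reduction to $A=\mb{Z}^u\times\{\mbf 0\}$ and your argument for $m+u\le n$ are correct and essentially the paper's own: for almost every $X$ the block $X^{(2)}$ has rank $m$, giving the stronger bound $\abs{\mbf qX-\mbf p}\ge c(X)\abs{\mbf q}$, hence full measure and maximal dimension in that regime. The problem is the case $m+u>n$, where your proposal is not a proof but a programme, and the programme as stated does not work: the deletion count that you yourself flag as ``the main obstacle'' is the entire content of the theorem, and the heuristics you substitute for it are quantitatively false. First, a resonant set $\{X:\abs{\mbf qX-\mbf p}\le c\abs{\mbf q}^{\tau}\}$ is a neighbourhood, of thickness $\sim c\abs{\mbf q}^{\tau-1}$, of an affine subspace of codimension $n$ in $\mb R^{mn}$; since this thickness is far smaller than the subcube side, such a set meets of order $\eta^{-(mn-n)}$ subcubes of any parent it crosses, which is unbounded as soon as $m\ge 2$ --- not ``boundedly many''. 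Second, the number of dangerous pairs $(\mbf p,\mbf q)$ per parent cube is not bounded either: with your normalisation $cR_k^{\tau}\sim\eta^{k+1}$ it works out to be of order $(cR_k/\eta)^n$, so the naive spoiled-subcube count is of order $(cR_k)^n\eta^{-mn}$, which exceeds any fixed fraction of $\eta^{-mn}$ once $k$ is large, however small $c$ is. The measure of the union of the resonant sets inside a parent is indeed only a fraction $\sim c^n$ of the parent; the divergent factor $R_k^n$ is exactly the price of converting measure into a count of cubes that are much fatter than the slabs. This thin-slab-versus-cube tension is precisely what forced Schmidt to invent his games to prove $\dim\bad(m,n)=mn$; note that your construction, taken at face value with $u=n$, would give an elementary proof of Schmidt's theorem, which should itself raise suspicion. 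Closing the gap needs a genuinely new mechanism (games, or exploitation of the separation and alignment of the resonant subspaces), not merely a compatible choice of $\eta$ and $R_{k+1}/R_k$.

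The paper avoids any such construction by outsourcing the hard part to Schmidt. Take $S=\bad(m+u-n,n)$, of dimension $(m+u-n)n$ by Schmidt's theorem, and let $\mcal X$ be the positive-measure family of matrices $\begin{pmatrix} I_u & 0\\ X' & X''\end{pmatrix}\in\GL_n(\mb R)$. A fibering lemma (extending Lemma 2.1 of \cite{bad}) shows that the set of stacked matrices $\binom{X}{YX}$ with $X\in\mcal X$, $Y\in S$ has Hausdorff dimension $(n-u)n+(m+u-n)n=mn$; and since right-multiplication by the fixed invertible $X$ only changes constants, every such stacked matrix satisfies the defining inequality \eqref{eq:3}, i.e.\ its lower $m\times n$ block lies in $\bad_A(m,n)$. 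Hence $\dim\bad_A(m,n)\ge mn$ with no Cantor construction at all. If you want to salvage your approach for the case $m+u>n$, this is the route to take.
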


\begin{rem}
\end{rem}
\begin{itemize}
\item [(i)] Clearly when $u=0$, $\bad_A(m, n)$ is identified with the set
$\bad^*(m, n)$ of \cite{bad}. It not only settles the
conjecture but also disagrees with the `final comment' of the paper that
for $m<n$ the set $\bad^*(m, n)$ is zero dimensional.

\item [(ii)] As a consequence of Khintchine--Groshev type theorem
  established in \cite{mixedkg} that for $m+u>n$,
  \[|\bad_A(m, n)|_{mn}=0,\] where $| \, . \, |_k$ denotes
  $k$-dimensional Lebesgue measure.

\item[(iii)] As a consequence of our theorem, it is clear that for
  $m+u\leq n$ the complementary set to $\bad_A(m.n)$ should be of zero
  $mn$ dimensional Lebesgue measure. This set corresponds to the set
  of well approximable systems of linear forms in the present setup,
  and the consequence stated here is indeed proved in \cite{mixedkg}.

\item [(iv)] An obvious corollary of the above theorem is that $\dim
  \bad_A(m,n)=mn.$

\end{itemize}

\section{Proof of Theorem \ref {thm:bad}}
\label{sec:proof-theorem}

Initially, we reduce the problem to the case when $A = \mb{Z}^u \times
\{(0,...,0)\}$. To see that it suffices to consider this case, let
$\{v_1, \dots, v_u\}$ be a basis of the lattice $A$, extend the basis
to a basis of $\mb{R}^n$ by adding vectors $\{v_{u+1}, \dots, v_n\}$ and
consider the map sending $v_i$ to $e_i$, the $i$'th standard basis
vector of $\mb{R}^n$. Evidently, this change of basis is a linear
automorphism of $\mb{R}^n$ and hence bi-Lipschitz. Since such maps
preserve Hausdorff dimension as well as the properties of being
null/full with respect to Lebesgue measure, it suffices to consider
the problem in the image of the map. The inequalities defining the
problem are also unchanged up to modifying the constant $C(X)$ by a
factor depending only on $A$.

With the particular case $A = \mb{Z}^u \times \{(0,...,0)\}$ in mind,
the defining inequalities take a particularly pleasing form. Namely,
$X \in \bad_A(m,n)$ if and only if
\begin{equation}
  \label{eq:3}
  \abs{(\mbf{p},\mbf{q})
  \begin{pmatrix}
    I_u & 0 \\
    X_u & \tilde{X}
  \end{pmatrix}}
  \geq C(X) \abs{\mbf{q}}^{-\frac{m+u}{n} + 1},
\end{equation}
for all $(\mbf{p},\mbf{q}) \in \mb{Z}^u \times \mb{Z}^m \setminus
\{\mbf{0}\}$, where $X$ is the matrix $(X_u, \  \tilde{X})$. Put
differently, we want an appropriate lower bound on the system of
linear forms
\begin{equation*}
  \max\left(\|\mbf{q\cdot
      x^{(1)}}\|,\ldots,\|\mbf{q\cdot x^{(u)}}\|, |\mbf{q\cdot
      x^{(u+1)}}|,\ldots,|\mbf{q\cdot x^{(n)}}|\right),
\end{equation*}
where $(\mbf{p},\mbf{q}) \in \mb{Z}^u \times \mb{Z}^m \setminus
\{\mbf{0}\}$.

We now split the proof into two parts depending upon the choices of
$m, u$ and $n$.

First, we discuss the case when $m+u=n$. It is easily seen that
\[
\mb{R}^{m(m+u)}\setminus \{X\in\mb{R}^{m(m+u)}:
\mbf{x}^{(u+1)},\ldots,\mbf{x}^{(n)} \text{ are linearly dependent}\}
\ = \ \bad_A(m,m+u).
\]
Since $\{X\in\mb{R}^{m(m+u)}: \mbf{x}^{(u+1)},\ldots,\mbf{x}^{(n)}
\text{ are linearly dependent}\}$ is a set of Hausdorff dimension less
then $m(m+u)$
\[
|\{X\in\mb{R}^{m(m+u)}: \mbf{x}^{(u+1)},\ldots,\mbf{x}^{(n)} \text{
  are linearly dependent}\}|_{m(m+u)}=0.
\]
 Hence,
the complement is full, \emph{i.e.} $\bad_A(m,m+u)$ is full. This
clearly implies that $\dim\bad_A(m,m+u) = m(m+u)$, which proves our
main theorem in the particular case $m+u=n$.

Note that in fact we get the stronger inequality
\begin{equation}
  \label{eq:2}
  |\mbf{q}X-\mbf{p}|>C(X)|\mbf{q}|\;\;\;\forall\;\;(\mbf{p,
    q})\in A \times \mb{Z}^{m} \setminus \{\mbf{0}\}
\end{equation}
in the set considered. This is much stronger than the defining
inequality of the set $\bad_A(m,n)$ and further underlines the
quantitative difference between the case $m+u \leq n$ and the converse
$m+u > n$.

Suppose now that $m + u < n$. We will argue much in the spirit of the
above. For a generic $X \in \mb{R}^{mn}$, the matrix $\tilde{X}$ in
\eqref{eq:3} has full rank. Performing Gauss elimination on the
columns of a matrix of the form of \eqref{eq:3} implies the existence
of an invertible $n \times n$-matrix $E(X)$ such that
\begin{equation*}
  \begin{pmatrix}
    I_u & 0 \\
    X_u & \tilde{X}
  \end{pmatrix} =
  \begin{pmatrix}
    I_u & 0 & 0 \\
    \hat{X} & I_m & 0
  \end{pmatrix} E(X).
\end{equation*}
Applying this matrix from the right to a vector $(\mbf{p}, \mbf{q})$,
we see that
\begin{equation*}
  (\mbf{p},\mbf{q})
  \begin{pmatrix}
    I_u & 0 \\
    X_u & \tilde{X}
  \end{pmatrix} =
  (\mbf{p},\mbf{q})
  \begin{pmatrix}
    I_u & 0 & 0 \\
    \hat{X} & I_m & 0
  \end{pmatrix} E(X) =
  \begin{pmatrix}
    \mbf{p} + \mbf{q}\hat{X} \\
    \mbf{q} \\
    \mbf{0}
  \end{pmatrix}^T E(X).
\end{equation*}

Multiplication by the matrix $E$ on the right hand side only serves to
distort the unit cube in the absolute value to a different
parallelipiped depending on $X$. This induces a different norm on the
image, but by equivalence of norms on Euclidean spaces, this
distortion can be absorbed in a positive constant.  In other words,
\begin{equation}
  \label{eq:1}
  \abs{\left(\mathbf{p, q}\right)\left(
      \begin{array}{cc}
        I_u & 0 \\
        X_u & \tilde{X}
      \end{array}
    \right)} \geq C(X)\abs{
    \begin{pmatrix}
      \mbf{p} + \mbf{q}\hat{X} \\
      \mbf{q} \\
      \mbf{0}
    \end{pmatrix}^T}
\end{equation}
for all $\left(\mathbf{p, q}\right)\in A \times \mb{Z}^{m} \setminus \{\mbf{0}\}.$

Finally, since the norm is the \emph{supremum} norm, we get
\begin{equation*}
  \abs{
    \begin{pmatrix}
      \mbf{p} + \mbf{q}\hat{X} \\
      \mbf{q} \\
      \mbf{0}
    \end{pmatrix}^T} \geq \abs{\mbf{q}}.
\end{equation*}
By \eqref{eq:1}, almost every $X$ is in $\bad_A(m,n)$, and in fact
with the stronger requirement from \eqref{eq:2}. This completes the
proof of Theorem \ref{thm:bad} in the case when $m+u \leq n$.

Now we consider the case $m+u>n.$ We will apply the following
extension of Lemma 2.1 of \cite{bad}.

\begin{lem}
  \label{lem:max-dim}
  Let $S \subseteq \mat_{(m +u-n)\times n}(\mathbb{R})$ of Hausdorff
  dimension $(m +u-n) n$. Let $\mcal{X} \subseteq  \GL_{n}(\mathbb{R})$
  be a set of positive $(n-u)n$--Lebesgue measure. Then, the set
  \begin{equation*}
    \Lambda = \left\{
       \begin{pmatrix}
        X^\prime \\
        \tilde{X}X^\prime
      \end{pmatrix}
      \in \mat_{(m+u) \times n}(\mathbb{R}) : X^\prime \in \mcal{X},
      \tilde{X} \in S \right\}
  \end{equation*}
  has Hausdorff dimension $mn$.
\end{lem}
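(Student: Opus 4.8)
The plan is to realise $\Lambda$ as the image of the product set $\mcal{X} \times S$ under an explicit, locally bi-Lipschitz map, and then to compute the Hausdorff dimension of that product. I would introduce
\[
\phi \colon \GL_n(\mb{R}) \times \mat_{(m+u-n)\times n}(\mb{R}) \longrightarrow \mat_{(m+u)\times n}(\mb{R}), \qquad \phi(X', \tilde{X}) = \begin{pmatrix} X' \\ \tilde{X}X' \end{pmatrix},
\]
so that by definition $\Lambda = \phi(\mcal{X} \times S)$. The first observation is that $\phi$ is injective: the top $n\times n$ block of $\phi(X',\tilde X)$ returns $X'$, and since $X' \in \GL_n(\mb{R})$ the bottom block returns $\tilde X = (\tilde X X')(X')^{-1}$. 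Thus $\phi$ admits an inverse $\psi$, defined on the open set where the top block is invertible, sending a matrix with top block $Y$ and bottom block $Z$ to $(Y, ZY^{-1})$. Both $\phi$ (polynomial) and $\psi$ (rational with non-vanishing denominator) are smooth on their domains, hence Lipschitz on compacta.

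Next I would pass from $\Lambda$ to the product. Since Hausdorff dimension is invariant under bi-Lipschitz maps and countably stable, I would exhaust $\GL_n(\mb{R}) \times \mat_{(m+u-n)\times n}(\mb{R})$ by compact sets $K_j = \{X' : \abs{X'}\le j,\ \abs{(X')^{-1}}\le j\} \times \{\tilde X : \abs{\tilde X}\le j\}$, on each of which $\phi$ and $\psi$ are Lipschitz (the top block stays in a compact subset of $\GL_n(\mb{R})$, so $(X')^{-1}$ stays bounded). This yields $\dim \Lambda = \dim(\mcal{X} \times S)$. For the product I would invoke the standard inequalities (see \cite{Falc2})
\[
\dim \mcal{X} + \dim S \;\leq\; \dim(\mcal{X} \times S) \;\leq\; \dim \mcal{X} + \dim_{\mathrm{P}} S,
\]
where $\dim_{\mathrm{P}}$ denotes packing dimension. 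By hypothesis $\mcal{X}$ carries positive $(n-u)n$--dimensional Lebesgue measure, hence $\dim \mcal{X} = (n-u)n$; and $S$ has full Hausdorff dimension $(m+u-n)n$ in $\mat_{(m+u-n)\times n}(\mb{R}) \cong \mb{R}^{(m+u-n)n}$, which forces $\dim_{\mathrm{P}} S = (m+u-n)n$ as well, the packing dimension lying between the Hausdorff dimension and the ambient dimension. Both bounds then collapse to $(n-u)n + (m+u-n)n = mn$, giving $\dim \Lambda = mn$.

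The two points deserving care, and where I would concentrate the argument, are the following. First, $\phi$ is bi-Lipschitz only locally: its domain is non-compact and $(X')^{-1}$ degenerates as $X'$ approaches the boundary of $\GL_n(\mb{R})$, so the exhaustion by compacta together with countable stability is essential rather than cosmetic. Second, the product formula for Hausdorff dimension is in general only an inequality, and the upper bound genuinely requires the packing dimension of the second factor; the argument closes precisely because $S$ is of full dimension in its ambient Euclidean space, so its Hausdorff and packing dimensions agree. By contrast, the lower inequality $\dim(\mcal{X}\times S)\ge \dim \mcal{X}+\dim S$ holds in complete generality and requires no regularity of the factors, so the only real obstacle is matching it with a sharp upper bound.
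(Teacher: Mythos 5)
Your proposal is correct in substance, but it takes a genuinely different route from the paper. The paper never passes through the product set: it supposes $\dim\Lambda<mn$, takes a cover of $\Lambda$ with small $(mn-\delta)$-sum, and runs a Fubini-type argument, integrating over $\mcal{X}$ (with respect to Lebesgue measure) the counting function $\lambda_C$ recording which cover elements meet each slice. This produces a single slice $B(X_0)$, bi-Lipschitz to $S$, carrying a cover of small $\bigl((m+u-n)n-\delta\bigr)$-sum, contradicting $\dim S=(m+u-n)n$. In other words, the paper proves by hand exactly the instance of the product inequality it needs. You instead identify $\Lambda$ with $\mcal{X}\times S$ via a locally bi-Lipschitz map (your exhaustion by compacta plus countable stability is correct and genuinely needed, since $(X')^{-1}$ blows up near the boundary of $\GL_n(\mb{R})$) and then quote the product formulas. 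This is more modular, and your upper bound is more explicit than the paper's ``the upper bound is trivial'' (note both arguments tacitly use that $\mcal{X}$ lies in the $(n-u)n$-dimensional affine plane of matrices with first $u$ rows $(I_u\ 0)$; without that, neither $\dim\mcal{X}=(n-u)n$ nor the lemma itself would hold).

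The one place you overreach is the closing claim that the lower inequality $\dim(\mcal{X}\times S)\ge\dim\mcal{X}+\dim S$ ``holds in complete generality and requires no regularity of the factors.'' The lemma is stated for an \emph{arbitrary} set $S$ of full dimension, while the product lower bound in the cited reference \cite{Falc2} is proved for Borel sets: its proof goes through Frostman's lemma and the mass distribution principle, which fail for arbitrary (non-analytic) sets. The fully general statement is true, but it is a much deeper result (Howroyd's theorem on products, via weighted Hausdorff measures), not the standard inequality of \cite{Falc2}. You can close this gap in two ways: (a) observe that in the application $S=\bad(m+u-n,n)$ is an $F_\sigma$ set and $\mcal{X}$ is open in its affine hull, so the Borel version suffices for the paper's purposes; or (b) prove the needed case directly, using that Lebesgue measure restricted to a measurable hull of $\mcal{X}$ is itself a Frostman measure of exponent $(n-u)n$ and extracting from a cover of the product a cover of a single slice $\{X_0\}\times S$ --- which, in disguise, is precisely the computation the paper performs.
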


\begin{proof}[Proof of Lemma \ref {lem:max-dim}]
  The proof is essentially an adaptation of the proof in \cite{bad}.
  As in that paper, the upper bound is trivial.

  Without loss of generality, we will assume that $|\mcal{X}|_{(n-u)n}
  < \infty$. If this is not the case, we will replace $\mcal{X}$ with
  a subset of $\mcal{X}$ of positive and finite measure.  Suppose now
  for a contradiction that $\dim \Lambda < mn$ and fix an $\epsilon >
  0$.  Then, there is a $\delta > 0$ and a cover $\mathcal{C}$ of
  $\Lambda$ by hypercubes in $\mathbb{R}^{(m+u)n}$ such that
  \begin{equation*}
    \sum_{C \in \mathcal{C}} \diam(C)^{mn-\delta} < \epsilon.
  \end{equation*}

  For a fixed $X^\prime \in \mcal{X}$, define the set
  \begin{equation*}
    B(X^\prime) = \left\{
      \begin{pmatrix}
        X^\prime \\
        \tilde{X}X^\prime
      \end{pmatrix}
      \in \mat_{(m+u) \times n}(\mathbb{R}) : \tilde{X} \in S \right\}.
  \end{equation*}
  Note that
  \begin{equation*}
    \mathcal{C}(X^\prime) =  \left\{ \left(
      \begin{pmatrix}
        X^\prime \\
        \mat_{(m +u-n) \times n} (\mathbb{R})
      \end{pmatrix} \cap C\right) \in \mat_{(m+u) \times
      n}(\mathbb{R}) : C \in \mathcal{C} \right\}
  \end{equation*}
  is a cover  of $B(X^\prime)$ by $(m +u-n)n$-dimensional hypercubes
  in the slice of the larger space $\mat_{m \times n}(\mathbb{R})$
  obtained by fixing the upper matrix to be $X'$.

  As in \cite{bad}, we define for each $C \in \mcal{C}$ a function,
  \begin{equation*}
    \lambda_C(X^\prime) =
    \begin{cases}
      1 & \text{if } \left(
        \begin{pmatrix}
          X^\prime \\
          \mat_{(m +u-n) \times n} (\mathbb{R})
        \end{pmatrix} \cap C\right) \neq \emptyset \\
      0 & \text{otherwise.}
    \end{cases}
  \end{equation*}
  It is easily seen that
  \begin{equation*}
    \int_{\mcal{X}} \lambda_C(X^\prime) dX^\prime \leq \diam(C)^{(n-u)n},
  \end{equation*}
  where the integral is with respect to the $(n-u)\times n$-dimensional
  Lebesgue measure. Also,
  \begin{equation*}
    \sum_{C \in \mathcal{C}(X^\prime)} \diam(C)^{(m +u-n)n - \delta} =
    \sum_{C \in \mathcal{C}} \lambda_C(X^\prime) \diam(C)^{(m +u-n)n - \delta}.
  \end{equation*}

  We integrate the latter expression with respect to $X^\prime$ to obtain
  \begin{multline*}
    \int_{\mcal{X}} \sum_{C \in \mathcal{C(X^\prime)}} \diam(C)^{(m
      +u-n)n - \delta} dX^\prime = \sum_{C \in
      \mathcal{C}}\int_{\mcal{X}} \lambda_C(X^\prime) dX^\prime
    \diam(C)^{(m +u-u)n -
      \delta} \\
    \leq \sum_{C \in \mathcal{C}} \diam(C)^{mn - \delta} < \epsilon.
  \end{multline*}
  Since the right hand side is an integral of a non-negative function
  over a set of positive measure, there must be an $X_0 \in \mcal{X}$ with
  \begin{equation*}
    \sum_{C \in \mathcal{C}(X_0)} \diam(C)^{(m +u-n)n - \delta} <
    \frac{\epsilon}{|\mcal{X}|_{(n-u)n}}.
  \end{equation*}
  Indeed, otherwise
  \begin{equation*}
    \int_{\mcal{X}} \sum_{C \in \mathcal{C}(X^\prime)} \diam(C)^{(m +u-n)n - \delta}
    dX^\prime
    \geq \int_{\mcal{X}} \frac{\epsilon}{|\mcal{X}|_{(n-u)n}} dX^\prime = \epsilon.
  \end{equation*}

  Take such an $X_0$. Since we have produced a cover of $B(X_0)$,
  whose $(m +u-n)n - \delta)$-length is less than $\epsilon$, it follows
  that $\dim B(X_0) \leq (m +u-n)n - \delta$.  The map from $B(X_0)$ to
  $S$ defined by
  \begin{equation*}
    \begin{pmatrix}
        X_0 \\
        \tilde{X}X_0
      \end{pmatrix} \mapsto
    \begin{pmatrix}
        I_{n\times n} \\
        \tilde{X}
      \end{pmatrix}
  \end{equation*}
  is evidently bi-Lipschitz as $X_0 \in \mcal{X}$ and hence invertible. It
  follows that $\dim S = \dim B(X_0) \leq (m +u-n)n - \delta$, which
  contradicts the original assumption and completes the proof.
\end{proof}

Proving the main theorem is now easy in the case $m +u> n$. Let
\begin{equation*}
  \mcal{X} = \left\{X \in \GL_{n}(\mathbb{R}): X =
    \begin{pmatrix}
      I_u & 0 \\
      X' & X''
    \end{pmatrix}\right\}.
\end{equation*}
Evidently, the $(n-u)n$-dimensional Lebesgue measure of $\mcal{X}$ is
positive. Let $S = \bad(m+u-n, n)$, the usual set of badly
approximable system of $n$ linear forms in $m+u-n$ variables.
Schmidt's theorem \cite{schmidt} tells us that $\dim S = (m+u-n)n$.

For $Y \in S$, there is a constant $C(Y) > 0$ such that for any
$\mathbf{r} \in \mathbb{Z}^{m+u-n}\setminus\{\mbf 0\}$ and any $\mathbf{p}
\in \mathbb{Z}^n$,
\begin{equation*}
  \abs{(\mathbf{p}, \mathbf{r})
    \begin{pmatrix}
      I_n \\
      Y
    \end{pmatrix}} \geq C(Y) \abs{\mathbf{r}}^{-\frac{m+u}{n} + 1}.
\end{equation*}
Multiplying a matrix $X \in \mcal{X}$ onto $\binom{I_n}{Y}$ only
changes the constant $C(Y)$ to another positive constant $C(X, Y,
\epsilon) > 0$. Hence,
\begin{equation}
  \label{eq:4}
  \abs{\mathbf{q}
    \begin{pmatrix}
      X \\
      YX
    \end{pmatrix}} \geq C(X, Y) \abs{\mathbf{q}}^{-\frac{m+u}{n} +
    1},
\end{equation}
for any $\mathbf{q} \in \mathbb{Z}^m \setminus \{\mbf 0\}$. The matrix
in \eqref{eq:4} has the form
\begin{equation*}
  \begin{pmatrix}
    I_u & 0 \\
    X_u & \tilde{X}
  \end{pmatrix}
\end{equation*}
by choice of $\mcal{X}$. In other words, using \eqref{eq:3} we see
that the set $\Lambda$ arising from Lemma \ref{lem:max-dim} is a
subset of $\bad_A(m,n)$ with some additional `dummy' coordinates
attached in the first $u$ rows.  It follows that $\dim \bad_A(m,n)
\geq \dim \Lambda = mn$, which completes the proof..

\noindent{\em Acknowledgements. } MH would like to thank Simon
Kristensen for his hospitality at Aarhus. 

\def\cprime{$'$}
\providecommand{\bysame}{\leavevmode\hbox to3em{\hrulefill}\thinspace}
\providecommand{\MR}{\relax\ifhmode\unskip\space\fi MR }
\providecommand{\MRhref}[2]{%
  \href{http://www.ams.org/mathscinet-getitem?mr=#1}{#2}
}
\providecommand{\href}[2]{#2}


\begin{thebibliography}{10}

\bibitem{Arn}
V.~I. Arnol{\cprime}d, \emph{Geometrical methods in the theory of ordinary
  differential equations}, Grundlehren der Mathematischen Wissenschaften, vol.
  250, Springer-Verlag, New York, 1983.

\bibitem{mixed}
H.~Dickinson, \emph{The {H}ausdorff dimension of sets arising in metric
  {D}iophantine approximation}, Acta Arith. \textbf{68} (1994), no.~2,
  133--140.

\bibitem{dir}
L.~Dirichlet, \emph{{Verallgemeinerung eines Sates aus der Lehre von den
  Ketten-br\"{u}chen etc. S. B.}}, preuss. Akad. W. (1842), 93--95.

\bibitem{drv}
M.~M. Dodson, B.~P. Rynne, and J.~A.~G. Vickers, \emph{Diophantine
  approximation and a lower bound for {H}ausdorff dimension}, Mathematika
  \textbf{37} (1990), no.~1, 59--73.

\bibitem{drv2}
\bysame, \emph{The {H}ausdorff dimension of exceptional sets associated with
  normal forms}, J. London Math. Soc. (2) \textbf{49} (1994), no.~3, 614--624.

\bibitem{Falc2}
K.~Falconer, \emph{Fractal geometry: Mathematical foundations and
  applications}, John Wiley \& Sons Ltd., Chichester, 1990.

\bibitem{Groshev}
A.~V. Groshev, \emph{Une theoreme sur les systemes des formes lineaires},
  Doklady Akad. Nauk SSSR. \textbf{19} (1938), 151--152.

\bibitem{mixedkg}
M.~Hussain, \emph{Metric diophantine approximation: the absolute value theory},
  D. Phil thesis, University of York, U. K., 2010.

\bibitem{bad}
\bysame, \emph{A note on badly approximable linear forms}, Bull. Aust. Math.
  Soc. \textbf{83} (2011), no.~2, 262--266.

\bibitem{K}
A.~Khintchine, \emph{Einige {S}\"atze \"uber {K}ettenbr\"uche, mit
  {A}nwendungen auf die {T}heorie der {D}iophantischen {A}pproximationen},
  Math. Ann. \textbf{92} (1924), no.~1-2, 115--125.

\bibitem{schmidt}
W.~M. Schmidt, \emph{Badly approximable systems of linear forms}, J. Number
  Theory \textbf{1} (1969), 139--154. \MR{0248090 (40 \#1344)}

\end{thebibliography}
\end{document}